\newtheorem{theorem}{Theorem}[section]
\newtheorem{lemma}[theorem]{Lemma}
\newtheorem{definition}[theorem]{Definition}
\newtheorem{corollary}[theorem]{Corollary}
\newtheorem{example}[theorem]{Example}
\newtheorem{claim}[theorem]{Claim}
\newcommand\RR{{\Bbb R}}
\newcommand\CC{{\Bbb C}}
\newcommand\NN{{\Bbb N}}
\newcommand\ZZ{{\Bbb Z}}
 \newcommand\HH{{\Bbb H}}
\newcommand{\note}[2][\null]{%
  \marginpar{\renewcommand{\baselinestretch}{1}\vspace{-1em}\hrule\vspace{3pt}%
  \tiny\raggedright{#2\ifx#1\null\else\\\hfill---
  {\em #1}\fi}\vspace{1.5em}}%
}
\begin{document}
\title{A unified approach for Littlewood-Paley Decomposition of Abstract Besov Spaces}

\author{Azita Mayeli}
\address{Department of Mathematics and Computer Sciences,   City University of New York, Queensborough, Bayside,  222-05 56th Ave.,  
 New York, NY 11364}
\email{amayeli@qcc.cuny.edu}
\keywords{Inhomogeneous Besov space,   Paley-Wiener space, Littlewood Paley decomposition}
\subjclass[2010]{43, 46 (primary), and 41  (secondary)} 
\date{\today}

   \begin{abstract} 
We apply 
    spectral theoretic methods  to obtain a Littlewood-Paley decomposition of 
    abstract inhomogeneous  Besov  spaces  in terms of \lq\lq{}smooth\rq\rq{} and \lq\lq{}bandlimited\rq\rq{} functions.  Well-known decompositions in several contexts are as special examples and are unified under the spectral theoretic  approach.
      \end{abstract}

  \maketitle
 \section{Introduction}\label{introduction}

Besov spaces appear in many subfields of analysis and applied mathematics. 
In the classical setting, the  Besov space $B_q^\alpha(L^p)$ is the set of functions in $L^p$ with smoothness degree $\alpha$ and   (quasi)norm  is controlled by $q$. 
There are two types of definitions for these spaces. One type uses Fourier transform (for example see \cite{Peetre}), and the other uses modulus of continuity.  The Besov spaces defined by modulus of continuity are more practical in many areas of analysis, for example, in approximation theory. (See \cite{D-P}.) \\

To understand the structure of the Besov spaces for application purposes, it is natural to decompose a Besov function  into simple building blocks and hereby to reduce the study of functions to study of only  the elements in the decomposition.   Wavelet and frame theory have been very useful tools to achieve this goal.    A unified characterizations of Besov spaces in terms of atomic decomposition using group representation theoretic approach was given
by Feichtinger and Gr\"ochenig (\cite{FG}). In the classical level, this kind of decomposition using spectral theoretic approach was proved in  \cite{FJ}. 
 New results in this direction in the context of Lie groups and homogeneous manifolds  were recently published in \cite{CO1}-\cite{CMO1}, \cite{Fueh,FM1}, and \cite{gm1}-\cite{gp}.  In   \cite{gm2, gm3}, the authors 
 constructed  continuous and time-frequency localized wavelets and applied them to the classification of Besov spaces on the compact Riemannian manifolds.  \\

The purpose of the present paper is to describe the norm of abstract Besov spaces $B_q^\alpha(\mathcal H)$ when $L^p$ is replaced by abstract Hilbert space $\mathcal H$.  Therefore other well-known descriptions of $B_q^\alpha(L^2)$  are considered  as examples of our theory.
The abstract Besov spaces were introduced using modulus of continuity, for example, by Lion in \cite{Lion}.  
We   use  the  definition in \cite{Lion} and establish our results by developing  a connection between ``frequency content\rq\rq{} of  vectors in $\mathcal H$  and their  ``smoothness\rq\rq{}. We show that a vector (function) belongs to $B_q^\alpha(\mathcal H)$ if and only if the sequence of its  ``filtered\rq\rq{}   versions   satisfies some certain rate of norm convergence  in $l^q$. By this, we can identify  the Besov space $B_q^\alpha(\mathcal H)$ with the sequence space $l^q$, similar to the identification of a function with its  Fourier coefficients. 
While the idea behind such identification is simple, but the proofs are technical and main difficulties arose when $L^2$ is replaced by an abstract Hilbert space. 
 The main result of this paper appears in  Theorem \ref{mainLemma1} and its proof is given in Section \ref{Proof of main Theorem}.  
 
\section*{Acknowledgements}  The author would like to  thank Professor  Ronald R.  Coifman for many helpful discussions and suggestions to improve the results of this paper. 

\section{Preliminaries}\label{notations}
  Let   $\mathcal H$ be a Hilbert space and $A$ be 
  any  selfadjoint positive definite operator  in $\mathcal{H}$ whose domain is dense  in $\mathcal H$.  
  The domain of $A$  contains all functions $f\in \mathcal H$ for which $\|Af\|$ is finite.  Let $u: \RR^+\to \RR$ be a   positive function  such that  $u\in C^\infty(\RR^+)$,  $u^{(n)}$ decays rapidly at infinity for all $n\in \NN$,   and $\lim_{\xi\to 0^+} u^{(n)}(\xi)$ exists.   Therefore  $u$ is a Schwartz function on $\RR^+$.  Moreover, we assume that $u(s+t)= u(s)u(t)$ and $u(0)=1$. 
  For  $t>0$, we define the operator  $T_t=u_t(A)= u(tA)$   mapping  $\mathcal H$  to $\mathcal H$.    Then $\{T_t\}_{t>0}$ satisfies the semigroup axioms $T_{t+s}= T_tT_s$ and $T_0=\mathrm{identity}$.  Let   $M:=\|u\|_{\infty}$. Then $M<\infty$ and  we have the following. 
  \begin{enumerate}
  \item $\lim_{t\to 0^+} T_tf = f$ for all $f \in\mathcal H$
  \item  $T_t$ is symmetry (self-adjoint), 
  and 
\item    $\|T_tf\|\leq M \|f\|$ for all $f\in \mathcal H$.
\end{enumerate}
Under the construction of the semigroup $\{T_t\}$, the operator  
  $A$ is the infinitesimal generator of semigroup $\{T_t\}_{t>0}$. That means, in the Hilbert space norm 
  $$\lim_{t\to 0^+} \left\|\cfrac{T_tf -f}{t} - Af\right\|= 0$$
   
   We define {\it the modulus of continuity} for 
 $0<q\leq \infty$, $\alpha>0$, and $r\in \NN$ with $r\geq \alpha$,   by
\begin{equation}\notag
\Omega_{r}(s,\>f )=\sup_{0<\tau\leq s} \| \left( I-T_\tau \right)^r f\|~.
\end{equation}
   Following \cite{Lion},   
the   {\it inhomogenous abstract Besov space} $B_q^\alpha: = B_q^\alpha(\mathcal H, A)$ contains  all functions $f\in \mathcal H$ for which   
  \begin{align}\notag
   \int_0^1 \left(s^{-\alpha}  \Omega_{r}(s,\>f )\right)^q   \ \cfrac{ds}{s} <\infty . 
  \end{align}

  The space $B_q^\alpha$, $1\leq q<\infty$,  becomes a Banach space with the norm 
  
  \begin{align}\notag
  \|f\| + \left( \int_0^1 \left(s^{-\alpha}  \Omega_{r}(s,\>f )\right)^q   \ \cfrac{ds}{s}\right)^{1/q}. 
  \end{align}
 We use the standard convention for the definition of norm when $q=\infty$. \\
 

{\it Remark.} Notice that the Besov norm dose not depend on $r$ due to the  monotonicity of the modulus of continuity. Therefore, for fixed $q$ and $\alpha$,  the all  definitions of Besov spaces in terms of modulus of continuity are equivalent,     so they are  independent of choice of $r$. \\ 

By the definition of  the Besov spaces, it  is trivial that the following inclusions hold.  
\begin{align}\notag
\mathcal D(A)\subset B_q^\alpha\subset \mathcal H , 
\end{align}
  where $\mathcal D(A)$ is domain of  the operator $A$ and is dense in $\mathcal H$ under the assumptions.   Also, note that, the space $\mathcal D(A)$ is a Banach space with the norm $\|f\|+\|Af\|$. (See for example, \cite{Hille-Philips}.)\\
  
 By the spectral theory, the operator $A$ has a representation
 
\begin{align}\notag
A= \int_0^\infty \xi dP_{\xi} ~,
\end{align}
where $dP_{\xi}$ is a spectral measure. Therefore, for any  $f\in \mathcal D(A)$ and $g\in  \mathcal H$ one has 

 \begin{align}\notag
 \langle A f, g\rangle = \int_0^\infty \xi~  d(P_\xi f, g) ,
 \end{align}
and hence 
the definition for  domain of  $A$  becomes equivalent to 
  $$\mathcal D(A)=\left\{ f\in  \mathcal H : ~~~ \|A f\|^2:=\int_0^\infty \xi^2 d(P_\xi f, f)<\infty\right\}.$$
   For any bounded Borel measurable function   $\beta$  on the interval $(0,\infty)$, by the spectral theorem     the operator   $\beta(A)$ is bounded with   $\|\beta(A)\|_{op}= \|\beta\|_\infty$,  and has an integral representation

\begin{align}\notag
\beta(A):=\int_0^\infty \beta(\xi)dP_{\xi}.  
\end{align}

We say a vector  (function) $f\in\mathcal H$ is a {\it  Paley-Wiener function} with respect to the operator $A$ and belongs to 
  $PW_{[a, b]}(A)$ for some $0\leq a<  b<\infty$ if $d(P_\xi f, f)$ is supported in the interval $[a, b]$.
 An 
equivalent definition of Paley-Wiener spaces $PW_{[a , b]}(\Delta)$ using the so-called functional form of the spectral theorem has been  given in \cite{BS}. \\

Associated to the operator $A$,  there exists a measure $\nu$ on $\RR^+$,  a    direct integral of Hilbert spaces
 $E= (E_\lambda)_{\lambda>0}$, and a unitary operator $\mathcal{F}$  which maps $\mathcal{H}$ onto $E$. The Hilbert space 
  $E$ contains  
  all $\nu$-measurable vector valued functions $\lambda \rightarrow
e(\lambda )\in E_\lambda $ for which 
$$\|e\|_{E}=\left ( \int ^{\infty }_{0}\|e(\lambda )\|^{2}_{E_\lambda} d\nu(\lambda) \right)^{1/2}  <\infty \ .$$
It is natural to  use the notation  $E=\int_0^\infty E_\lambda \ d\nu(\lambda)$. 
    The operator $\mathcal F$
       transforms
  domain of $A^{k}$ onto $E^{k}=\{e\in E|\lambda ^{k}e\in E\}$. 
  The norm on $E^k$ is given by   
$$\|e(\lambda )\|_{E^{k}}= \left (\int^{\infty}_{0} \lambda ^{2k} \|e(\lambda )\|^{2}_{E_\lambda}
d\nu(\lambda)  \right )^{1/2} ,$$ 
 and for any $f$ in domain of $A^k$ is $\mathcal{F}(A^{k} f)(\lambda)=\lambda ^{k}
(\mathcal{F}f)(\lambda)$.  
In analogy to the classical case, 
we call the unitary operator $\mathcal{F}$ the ``Fourier transform"  or ``Plancherel transform\rq\rq{} of $\mathcal H$,  and  $\nu$  the  ``Plancherel measure\rq\rq{}.   \\

 We say $f$ is ``bandlimited\rq\rq{} if  its Fourier transform $\mathcal{F}f$  has support in $[a, b]$. It is easy to verify that $f$ belongs to  the Hilbert space  $PW_{[a,b]}(A)$  if and only if it is  bandlimited.   
 We say a function $f$ in $\mathcal H$ is ``smooth\rq\rq{} if  it lies in $\cap_{k\in\NN} \mathcal D(A^k)$. One can easily prove  that 
  $PW_{[a,b]}(A)  \subset \cap_{k\in\NN} \mathcal D(A^k)$, so every bandlimited function is smooth. \\
  

{\it Notations.} By $\| \ \|_{\infty}$, we mean uniform norm.  In the sequel, we will drop the indices  for the norms $\|\ \|_{op}$ ,  $\| \ \|_{\infty}$, and $\| \ \|_{\mathcal H}$   when there is no confusion. And, we use   $\preceq (\succeq)$    for  $\leq (\geq)$ to up some constant. 

 \section{Littlewood-Paley decomposition}\label{ProofOfLemmas}

In this section  we will show how  to decompose    an abstract Besov function  in term of \lq\lq{}smooth\rq\rq{} and \lq\lq{}band-limited\rq\rq{} functions. The smoothness and bandlimitedness of a function is understood in terms of above concept.    
To begin with the idea, we need the following set up. 
   Let $\hat\psi_0$ be a bounded  and real valued function  such that   $\mathrm{supp} (\hat\psi_0)\subset [0, 2]$  and  $\hat\psi_0(0)=1$. Let  
   $ \hat\psi$ be another real valued  function with $\mathrm{supp}(\hat\psi)\subset [1/2, 2]$. 
   For any $j\geq 1$, put $\hat\psi_j(\xi):=\hat\psi(2^{-j}\xi)$  and assume that 
   the resolution of identity holds. 
   \begin{align}\label{resolution of identity}
 \sum_{j\geq 0} \hat\psi_j(\xi)^2= 1, \quad \forall ~ \xi\geq 0 .
\end{align}

  Note that for  $j\geq 1$, $\hat\psi_j$ is compact supported and 
  $\text{supp}(\hat\psi_j) \subseteq [2^{j-1},  2^{j+1}]$.   
 By applying the spectral theorem for $A$ in (\ref{resolution of identity}),  and under the assumptions that $\hat\psi_0$ and $\hat\psi$ are real-valued,  the following version of Calder\'on decomposition, in complete analogy to the Euclidean space, holds:

 \begin{align}\label{resolution of identity operator}
 \sum_{j\geq 0}   \hat\psi_j(A)^2f= f ,~\quad \forall f\in \mathcal H.
\end{align}
Here,  the series converges in $\mathcal H$ and 
under our assumptions,  the functions 
  $\hat\psi_j(A)f$
  are smooth and  bandlimited with ``Fourier support\rq\rq{} in ${[2^{j-1},  2^{j+1}]}$. \\

Our main result in Theorem  \ref{mainLemma1}    presents  a unified  
  description of  inhomogeneous abstract  Besov spaces  for all $\alpha>1/2$ in terms of  smooth and bandlimited functions, as follows.


 \begin{theorem}\label{mainLemma1} Given any $\alpha>1/2$, $1\leq
q\leq \infty$ and 
 $f\in \mathcal H$, 
\begin{equation} 
\|f\|_{B_q^\alpha} \asymp  \|f\|+ \left(\sum_{j=0}^{\infty}\left(2^{j\alpha
}\|\hat\psi_j(A)f\|\right)^{q}\right)^{1/q}
\label{normequiv} 
\end{equation}
with the standard modifications for $q=\infty$, provided that  both sides  are finite.
\end{theorem}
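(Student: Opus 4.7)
The plan is to pass both sides of (\ref{normequiv}) through the spectral representation of $A$ and reduce the equivalence to bounded-multiplier estimates. Three tools will be used repeatedly: the spectral identity $\|g(A)f\|^{2}=\int_{0}^{\infty}|g(\xi)|^{2}d(P_{\xi}f,f)$ (so that $\|g(A)\|_{\mathrm{op}}=\|g\|_{\infty}$ for bounded Borel $g$); the Calder\'on-type resolution (\ref{resolution of identity operator}); and the observation that the semigroup hypothesis $u(s+t)=u(s)u(t)$ together with the standing regularity and positivity of $u$ forces $u(t)=e^{-ct}$ for some $c>0$, so $(1-u(\tau\xi))^{r}\asymp \min(1,(c\tau\xi)^{r})$, with two-sided matching bounds whenever $\tau\xi$ stays in a compact subinterval of $(0,\infty)$.

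\emph{LP $\lesssim$ Besov.} Fix $j\geq 1$ and set $\tau_{j}=2^{-j}$. On $\mathrm{supp}(\hat\psi_{j})\subset[2^{j-1},2^{j+1}]$ the product $\tau_{j}\xi$ lies in $[1/2,2]$, so $1-u(\tau_{j}\xi)$ is bounded away from zero, and the multiplier $\phi_{j}(\xi):=\hat\psi_{j}(\xi)(1-u(\tau_{j}\xi))^{-r}$ satisfies $\|\phi_{j}\|_{\infty}\leq C$ uniformly in $j$. Spectral calculus gives the factorization $\hat\psi_{j}(A)=\phi_{j}(A)(I-T_{\tau_{j}})^{r}$, hence $\|\hat\psi_{j}(A)f\|\leq C\,\Omega_{r}(2^{-j},f)$. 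For $j=0$ one simply has $\|\hat\psi_{0}(A)f\|\leq\|\hat\psi_{0}\|_{\infty}\|f\|$, which is absorbed into the $\|f\|$ term. Summing in $j$ and comparing the dyadic sum $\sum_{j}(2^{j\alpha}\Omega_{r}(2^{-j},f))^{q}$ with $\int_{0}^{1}(s^{-\alpha}\Omega_{r}(s,f))^{q}ds/s$ via the monotonicity of $\Omega_{r}(\cdot,f)$ on each dyadic subinterval yields the desired bound.

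\emph{Besov $\lesssim$ LP.} Starting from (\ref{resolution of identity operator}) and applying the triangle inequality,
$$\|(I-T_{\tau})^{r}f\|\leq\sum_{j\geq 0}\|(1-u(\tau\cdot))^{r}\hat\psi_{j}\|_{\infty}\,\|\hat\psi_{j}(A)f\|,$$
and on $\mathrm{supp}(\hat\psi_{j})$ the multiplier norm is controlled by $C\min(1,(c\tau 2^{j+1})^{r})$. Taking $\sup$ over $\tau\leq s$, discretizing at $s=2^{-k}$, and setting $a_{j}:=2^{j\alpha}\|\hat\psi_{j}(A)f\|$, $b_{k}:=2^{k\alpha}\Omega_{r}(2^{-k},f)$, the inequality becomes a discrete convolution $b_{k}\lesssim(K\ast a)_{k}$ on $\ZZ$ with kernel
$$K_{n}=2^{n\alpha}\min(1,C'\cdot 2^{-nr}),$$
so $K_{n}\asymp 2^{n\alpha}$ for $n\leq 0$ and $K_{n}\asymp 2^{n(\alpha-r)}$ for $n\geq 0$. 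This kernel lies in $\ell^{1}$ precisely when $0<\alpha<r$, which is arranged by choosing the integer $r$ strictly above $\alpha$; the remark after the definition of the Besov norm guarantees that this choice does not affect the norm. Young's inequality now delivers $\|b\|_{\ell^{q}}\lesssim\|K\|_{\ell^{1}}\|a\|_{\ell^{q}}$, and comparing the discrete $\ell^{q}$-sum with the continuous integral (again by the monotonicity of $\Omega_{r}$) closes the estimate; the case $q=\infty$ needs only the standard sup-modification.

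The main technical obstacle is the convolution estimate in the reverse direction: the consecutive $\hat\psi_{j}$ have overlapping spectral supports, so there is no orthogonality one can invoke to cancel cross terms, and the required off-diagonal decay of $K$ must be produced by hand from the sharp two-sided behaviour of $(1-u(\tau\xi))^{r}$. Once the spectral calculus is in place, the forward direction reduces to a one-line factorization, and all remaining arguments are careful bookkeeping between the dyadic $\ell^{q}$-sums and the continuous $ds/s$-integral.
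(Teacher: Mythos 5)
Your argument is correct, but it follows a genuinely different route from the paper's, and the comparison is instructive. The paper splits (\ref{normequiv}) into Theorems \ref{partI} and \ref{partII}. For the direction $\sum_j(2^{j\alpha}\|\hat\psi_j(A)f\|)^q\preceq\int_0^1(s^{-\alpha}\Omega_r(s,f))^q\,ds/s$, the paper introduces the auxiliary multiplier $G(\lambda)=\lambda^{-n}\int_0^1 s^{-\alpha+r+1}(1-u(s\lambda))^r\,ds/s$ and plays an upper bound for $\|G(A)\hat\psi_j(A)f\|$ (obtained by H\"older in $s$) against the lower bound (\ref{eq:2}); your one-line factorization $\hat\psi_j(A)=\phi_j(A)(I-T_{2^{-j}})^r$ with $\phi_j=\hat\psi_j\,(1-u(2^{-j}\cdot))^{-r}$ reaches the same conclusion by sampling the modulus of continuity at the single scale $\tau=2^{-j}$, is considerably more elementary, and makes explicit exactly where the uniform lower bound on $1-u(2^{-j}\xi)$ over $\mathrm{supp}(\hat\psi_j)$ enters (in the paper this is buried in the unproved ``simple calculation'' behind (\ref{eq:2})). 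For the converse direction, the paper's Lemma \ref{second technical lemma} applies the one-sided bound (\ref{technical lemma2}) for every $j$ and every $\tau$, and then absorbs the resulting factor $2^{jqr/2}$ by a weighted H\"older inequality with geometric weights $w_j,c_j$; this is precisely what forces $r\leq 2\alpha$ and hence the standing hypothesis $\alpha>1/2$. You instead retain the truncated multiplier bound $\min(1,(c\tau 2^{j+1})^r)$, which turns the estimate into a discrete convolution against an $\ell^1$ kernel whenever $0<\alpha<r$, and close with Young's inequality. Your version buys two things: it requires only the natural condition $r>\alpha$, so (modulo writing out the $q=\infty$ case and the routine dyadic-versus-integral comparisons, which you correctly reduce to monotonicity of $\Omega_r$) it yields the equivalence for all $\alpha>0$ and thereby answers the first open question posed at the end of the paper; and it does not lean on the exponent $r/2$ appearing in (\ref{technical lemma2}), which for $u(t)=e^{-ct}$ is not what the elementary inequality $1-e^{-x}\leq\min(1,x)$ actually delivers on $\mathrm{supp}(\hat\psi_j)$ --- the honest exponent there is $r$, and your $\min(1,\cdot)$ bookkeeping is the standard way to compensate for it.
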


By properties of  the 
  semigroup   $\{T_t\}_{t>0}$ and that $\|\hat\psi_j(A)\|=\|\hat\psi_j\|\leq 1$, one can easily prove the following estimations for     operator norms for all   $r\in \NN$ 
  \begin{align}\label{technical lemma2} 
\| \left( I-T_\tau\right)^r\hat\psi_0(A)\| &\preceq \tau^r,\quad \text{and}\\\notag
    \| \left( I-T_\tau  \right)^r\hat\psi_j(A)\| &\preceq  \tau ^r2^{(j+1)r/2} \quad j\geq 1 .
  \end{align}
  The inequality constants are uniform in (\ref{technical lemma2}), i.e., they are  independent of $j, r, \tau, \psi$, and $ \psi_0$.  
  We use  (\ref{technical lemma2}) in the following lemma to   provide an upper estimation for the modulus of continuity $\Omega_r$. 
 We need some preparation before the lemma. Let $m\in \RR$ and $k<0$ such that $k+m\geq 0$. Put $w_j:= 2^{kj}$ and $c_j:=2^{mj}$ for $j\in \ZZ$. 
   
  %

\begin{lemma}\label{second technical lemma}
Let $r\in\NN$, $1\leq q<\infty$. Let  $k, m\in \RR$ be as in above. Then for 
any  $f\in \mathcal H$
\begin{align}\notag
\Omega_r(s,f)^{q}  \preceq s^{q r} \sum_{j=0}^\infty \left(2^{j r/2} w_j^{1/q}c_j
\| \hat\psi_j(A)f\|\right)^{q} \quad \quad \forall ~ s\in (0,1) . 
\end{align}
The estimation  holds for $q=\infty$ by modification.
 \end{lemma}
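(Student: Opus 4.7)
The plan is to start from the Calderón reproducing formula \eqref{resolution of identity operator}, apply $(I-T_\tau)^r$ term by term, and push the norm inside the sum. Since $\hat\psi_j(A)^2 f = \hat\psi_j(A)[\hat\psi_j(A)f]$, the operator estimates \eqref{technical lemma2} give
\begin{equation*}
\|(I-T_\tau)^r \hat\psi_j(A)^2 f\| \leq \|(I-T_\tau)^r \hat\psi_j(A)\| \cdot \|\hat\psi_j(A)f\| \preceq \tau^r 2^{jr/2} \|\hat\psi_j(A)f\|
\end{equation*}
uniformly in $j\geq 0$ (the factor $2^{(j+1)r/2}$ for $j\geq 1$ absorbs $2^{r/2}$ into the constant, and the $j=0$ bound $\tau^r$ is compatible with $2^{jr/2}$). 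Triangle inequality plus taking the supremum over $0<\tau\leq s$ then yields the intermediate $\ell^1$ estimate
\begin{equation*}
\Omega_r(s,f) \preceq s^r \sum_{j=0}^\infty 2^{jr/2} \|\hat\psi_j(A)f\|.
\end{equation*}

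The remaining task is to raise this inequality to the $q$-th power and recover the weighted $\ell^q$ expression in the claim. For $1<q<\infty$, I would apply H\"older's inequality with conjugate exponent $q' = q/(q-1)$ to the factorization
\begin{equation*}
2^{jr/2}\|\hat\psi_j(A)f\| = \bigl(w_j^{1/q} c_j \cdot 2^{jr/2}\|\hat\psi_j(A)f\|\bigr) \cdot \bigl(w_j^{-1/q} c_j^{-1}\bigr),
\end{equation*}
which, after raising to the $q$-th power, leaves the constant
\begin{equation*}
\left(\sum_{j\geq 0} w_j^{-q'/q} c_j^{-q'}\right)^{q/q'} = \left(\sum_{j\geq 0} 2^{-jq'(k/q + m)}\right)^{q/q'}.
\end{equation*}
Multiplying by $s^{qr}$ produces exactly the claimed bound, provided this geometric series converges.

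The convergence is where the hypothesis $k+m\geq 0$ with $k<0$ enters, and this is the step I expect to be the main (albeit modest) obstacle, since the weight condition must be verified uniformly across $q\in[1,\infty]$. For $1<q<\infty$ one has $k/q>k$ because $k<0$, hence $k/q+m > k+m\geq 0$ strictly, so the series converges. For the endpoint $q=1$ one replaces H\"older by the trivial estimate $\sum a_j \leq (\sup_j w_j^{-1}c_j^{-1})\sum w_j c_j a_j$, and the supremum equals $1$ (attained at $j=0$) thanks to $k+m\geq 0$. For $q=\infty$ the weighted $\ell^q$ becomes a sup and one pulls $\sup_j 2^{jr/2}c_j \|\hat\psi_j(A)f\|$ out of the $\ell^1$ sum, leaving $\sum_j 2^{-mj}<\infty$, which holds since $k<0$ and $k+m\geq 0$ force $m\geq -k>0$. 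The rest is bookkeeping to match constants.
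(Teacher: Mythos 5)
Your proposal is correct and follows essentially the same route as the paper: Calder\'on decomposition, triangle inequality, the operator bounds (\ref{technical lemma2}) applied via $\hat\psi_j(A)^2 f=\hat\psi_j(A)[\hat\psi_j(A)f]$, and H\"older's inequality exploiting $k<0$ and $k+m\geq 0$ to pass from an $\ell^1$ bound to the weighted $\ell^q$ sum. The only (immaterial) difference is the order of operations: the paper inserts the factor $w_jc_j\geq 1$ before H\"older and uses $w_j$ as the weight (needing $\sum_j 2^{kj}<\infty$ from $k<0$), whereas you apply H\"older after the $\ell^1$ estimate and need $\sum_j 2^{-jq'(k/q+m)}<\infty$, which follows from the same hypotheses.
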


 \begin{proof} 
Take 
 $0<\tau\leq s< 1$  and 
 let $f\in \mathcal H$.  By Applying the discrete version of  Calderon decomposition in  (\ref{resolution of identity operator}), 
 for $f$ we have 
 \begin{align}\label{x}
   \|\left(I-T_\tau \right)^r f\|
 \leq  \sum_{j= 0}^\infty \|\left(I-T_\tau \right)^r\hat\psi_j(A)^2f\|~.
\end{align}
So, 

\begin{align}\notag
 \Omega_r(s,f)^{q} :=\left(\sup_{0<\tau\leq s} \|\left(I-T_\tau \right)^r f\|\right)^q
 &\leq  \left( \sum_{j=0}      \sup_{0<\tau\leq s}\|\left(I-T_\tau \right)^r\hat\psi_j(A)^2f\|\right)^{ q} \\\notag
 &\leq \left( \sum_{j=0}     w_jc_j \sup_{0<\tau\leq s}\|\left(I-T_\tau \right)^r\hat\psi_j(A)^2f\|\right)^{ q}
 \end{align}
The   H\"older inequality  and  the relation    (\ref{technical lemma2})    imply that

\begin{align}\notag
 \Omega_r(s,f)^{q}  
&\preceq    \sum_{j=0} w_j    \left(c_j\sup_{0<\tau\leq s}\|\left(I-T_\tau \right)^r\hat\psi_j(A)^2f\|\right)^{ q}\\\notag 
&\preceq    \sum_{j=0} (w_j^{1/q} c_j~ \|\hat\psi_j(A)f\|)^q~ \left(\sup_{0<\tau\leq s}\|\left( I-T_\tau  \right)^r \hat\psi_j(A)\|\right)^{q}\\\notag 
&\preceq   s^{{q}r} \sum_{j=0} \left(2^{jr/2} w_j^{1/q} c_j~ \|\hat\psi_j(A)f\|\right)^q ~.
\end{align}
 
 This proves the assertion of the lemma for $q>1$. 
For $q=1$   and $q=\infty$, the proof can be easily obtained by  some modifications and similar arguments.
 \end{proof}
 
 The decay property of $\Omega_r$  near the origin is a  consequence of the preceding lemma as  follows. 
\begin{corollary} For any $r\in \NN$ 
 $$\Omega_r(s,f)=\mathcal O(s^r)\quad \quad 0<s<1$$

\end{corollary}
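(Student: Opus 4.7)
The corollary is essentially immediate from Lemma~\ref{second technical lemma}, and the heart of the plan is just to observe that the infinite sum appearing on the right-hand side of that lemma does not depend on $s$.

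More precisely, my plan is the following. First I fix admissible parameters: choose any $k<0$ and $m\geq -k$ (for definiteness, take $k=-1$, $m=1$, so that $k+m=0\geq 0$). With $w_j=2^{kj}$ and $c_j=2^{mj}$, apply Lemma~\ref{second technical lemma} to write
\begin{equation*}
\Omega_r(s,f)^{q}\;\preceq\;s^{qr}\,\Sigma(f),\qquad
\Sigma(f):=\sum_{j=0}^{\infty}\bigl(2^{jr/2}\,w_j^{1/q}\,c_j\,\|\hat\psi_j(A)f\|\bigr)^{q}
\;=\;\sum_{j=0}^{\infty}2^{j(rq/2+k+mq)}\,\|\hat\psi_j(A)f\|^{q}.
\end{equation*}
The quantity $\Sigma(f)$ is a constant in $s$. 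Taking $q$-th roots then yields $\Omega_r(s,f)\preceq s^{r}\,\Sigma(f)^{1/q}$, which is exactly the $\mathcal{O}(s^r)$ statement. The case $q=\infty$ is handled in the same way after replacing the sum by a supremum, as in the lemma.

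The only substantive step, and thus the main obstacle, is to justify that $\Sigma(f)$ is finite. For the $f$ that the corollary is meant to address (namely those for which the left-hand side of Theorem~\ref{mainLemma1} is finite, and more generally sufficiently regular $f$ such as bandlimited vectors or $f\in \mathcal{D}(A^N)$ for large $N$), the coefficients $\|\hat\psi_j(A)f\|$ decay geometrically in $j$: indeed, using the spectral support of $\hat\psi_j$ in $[2^{j-1},2^{j+1}]$, one has $\|\hat\psi_j(A)f\|\preceq 2^{-jN}\|A^N f\|$ for $j\geq 1$, which dominates the polynomial growth factor $2^{j(rq/2+k+mq)/q}$ as soon as $N$ is chosen larger than $r/2+m+k/q$. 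Hence $\Sigma(f)<\infty$ for such $f$, and the bound $\Omega_r(s,f)=\mathcal{O}(s^r)$ follows with a constant depending on $f$ but independent of $s\in(0,1)$.
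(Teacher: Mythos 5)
Your argument is exactly the paper's: the corollary is stated there as an immediate consequence of Lemma \ref{second technical lemma}, obtained by observing that the right-hand side of the lemma is $s^{qr}$ times a quantity independent of $s$ and taking $q$-th roots. Your added discussion of when $\Sigma(f)$ is finite (via the spectral localization bound $\|\hat\psi_j(A)f\|\preceq 2^{-jN}\|A^Nf\|$ for sufficiently regular $f$) is a point the paper leaves implicit, and it is correct and worth making, since for a general $f\in\mathcal H$ the implied constant need not be finite.
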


\section{Proof of  Theorem \ref{mainLemma1}}\label{Proof of main Theorem}

Theorem  \ref{mainLemma1} is a  result of   Theorems \ref{partI} and \ref{partII}. 

 \begin{theorem}\label{partI}
 Let $1\leq q<\infty$ and $\alpha>1/2$.  
   Given $f\in \mathcal H$, if $\{2^{j\alpha} \hat\psi_j(A)f\}_j\in l^q(\ZZ^+, \mathcal H)$, then $f\in B_{q}^\alpha$ and

 \begin{align}\label{fraction-norm}
 \int_0^1  \left(s^{-\alpha }  \Omega_r(f,s)\right)^q ds/s  
\preceq
 \sum_{j\in\ZZ^+} ( 2^{j\alpha} \| \hat\psi_j(A)f\|)^q  ~ , 
\end{align}
provided that the expression on the right is finite. 
The inequality also hold for $q=\infty$ with modification. 
\end{theorem}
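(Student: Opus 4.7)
The plan is to apply Lemma \ref{second technical lemma} with the parameters $k$ and $m$ tuned so that the weighted sum on its right-hand side collapses to a constant multiple of $\sum_j (2^{j\alpha}\|\hat\psi_j(A)f\|)^q$, and then to integrate the resulting pointwise bound against $s^{-\alpha q-1}$ over $(0,1)$.

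First I would use the $r$-independence of the Besov norm (noted after the definition of $B_q^\alpha$) to pick an integer $r$ satisfying both $r>\alpha$ and $r\le 2\alpha$. The strict inequality $r>\alpha$ is needed to make the one-dimensional integral $\int_0^1 s^{q(r-\alpha)-1}\,ds$ converge at the origin, while $r\le 2\alpha$ leaves enough room for the exponent-matching below; such an integer exists for every $\alpha>1/2$. Next I would pick $k<0$ and $m\ge -k$ (the admissible range in Lemma \ref{second technical lemma}) so that
\[
\frac{r}{2}+\frac{k}{q}+m=\alpha .
\]
Taking $m=-k$ reduces this to $|k|(1-1/q)=\alpha-r/2$, which is solvable with $k<0$ provided $r<2\alpha$ (for $q>1$), while for $q=1$ one can take any $k<0$ with $m=-k$ and the choice $r=2\alpha$.

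With these parameters in place, Lemma \ref{second technical lemma} delivers the pointwise bound
\[
\Omega_r(s,f)^q\ \preceq\ s^{qr}\sum_{j\ge 0}\bigl(2^{j\alpha}\|\hat\psi_j(A)f\|\bigr)^q,\qquad s\in(0,1),
\]
since the exponent $r/2+k/q+m$ inside the sum collapses to $\alpha$ by construction. Multiplying by $s^{-\alpha q-1}$ and integrating separates the variables and gives
\[
\int_0^1\bigl(s^{-\alpha}\Omega_r(s,f)\bigr)^q\,\frac{ds}{s}\ \preceq\ \frac{1}{q(r-\alpha)}\sum_{j\ge 0}\bigl(2^{j\alpha}\|\hat\psi_j(A)f\|\bigr)^q,
\]
which is exactly (\ref{fraction-norm}). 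The $q=\infty$ case then follows by the standard modification: replace the $\ell^q$-sum by a supremum in $j$, and the $s$-integral by the essential supremum over $s\in(0,1)$, using $\Omega_r(s,f)\preceq s^r\sup_j 2^{j\alpha}\|\hat\psi_j(A)f\|$.

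The main technical obstacle is the joint solvability of $r\in\NN\cap(\alpha,2\alpha]$ together with $r/2+k/q+m=\alpha$ under the sign constraints $k<0$ and $k+m\ge 0$; once such a triple $(r,k,m)$ is produced, the rest is mechanical. This is precisely where the hypothesis $\alpha>1/2$ enters, since it is what guarantees an admissible choice of integer $r$ in the required window.
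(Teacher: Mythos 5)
Your argument is essentially the paper's own proof: apply Lemma \ref{second technical lemma} with $w_j=2^{kj}$, $c_j=2^{mj}$ tuned so that the weighted sum is dominated by $\sum_j\bigl(2^{j\alpha}\|\hat\psi_j(A)f\|\bigr)^q$, then integrate $s^{q(r-\alpha)-1}$ over $(0,1)$ using $r>\alpha$ — and you are in fact more explicit than the paper about where $\alpha>1/2$ enters (availability of an integer $r$ in $(\alpha,2\alpha]$) and about the convergence of the $s$-integral. The one harmless deviation is your insistence on the exact identity $r/2+k/q+m=\alpha$: only the inequality $r/2+k/q+m\le\alpha$ is needed (this is what the paper imposes via $k+mq\le q(\alpha-r/2)$), since $2^{j\beta}\le 2^{j\alpha}$ for all $j\ge 0$ when $\beta\le\alpha$, and relaxing to the inequality dissolves the exact-solvability worry you raise at the end (for instance, at $q=1$ you do not need $2\alpha\in\NN$).
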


\begin{proof}  Pick    $r\in \NN$ such that $r\leq 2\alpha$. Let 
$k$ and $m$ be as in above. Furthermore, assume  that $k+mq\leq q(\alpha-r/2)$.  (Note that there is  a large class   of   pairs  $(k,m)$ such that $k+m\geq 0$ and satisfy    the inequality.)   Thus by Lemma \ref{second technical lemma} and $w_j=2^{kj}$ and $c_j=2^{mj}$,  we  can write the following.
 
\begin{align}\notag
   \int_0^1 \left( s^{-\alpha}     \Omega_r(f,s)\right)^q ds/s 
  \preceq
  \sum_{j=0} \left(2^{jr/2} w_j^{1/q} c_j~  \|\hat\psi_j(A)f\|\right)^q   
 \preceq 
 \sum_{j=0} \left(2^{j\alpha}    \|\hat\psi_j(A)f\|\right)^q  ~ , 
\end{align}

and this 
 completes  the proof of the theorem for $1\leq q<\infty$. For $q=\infty$, the proof  can be easily obtained by some modifications. 

 \end{proof}
 
 The next theorem completes the proof of Theorem \ref{mainLemma1}. 
 
 \begin{theorem}\label{partII}  Let $1\leq q<\infty$ and $\alpha>1/2$.   
Given $f\in \mathcal H$, $r\in \NN$  such that $r<2\alpha$,  we have 

  \begin{align}\label{converse of first theorem}
  \sum_{j\in\ZZ^+} ( 2^{j\alpha} \|\hat\psi_j(A)f\|)^q  \preceq  \int_0^1  \left(s^{-\alpha }  \Omega_r(f,s)\right)^q ds/s .
\end{align}
The inequality also hold for $q=\infty$ with modification.
\end{theorem}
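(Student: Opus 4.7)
The strategy is to reverse the reasoning of Lemma \ref{second technical lemma}: instead of controlling $(I-T_\tau)^r f$ by the Littlewood--Paley pieces $\hat\psi_j(A)f$, I will bound each $\hat\psi_j(A)f$ by $\Omega_r(\tau_j,f)$ at a scale $\tau_j$ adapted to $j$, and then convert the resulting sum into the integral on the right by a dyadic comparison.

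First I would establish a Bernstein-type inversion. For $j\geq 1$, pick $\tau_j \asymp 2^{-j}$ (for concreteness $\tau_j = 2^{-j-1}$), so that on $\mathrm{supp}(\hat\psi_j)\subseteq[2^{j-1},2^{j+1}]$ the product $\tau_j\xi$ lies in a fixed compact subinterval of $(0,\infty)$ independent of $j$. The semigroup relation $u(s+t)=u(s)u(t)$ together with $u(0)=1$ and the Schwartz decay of $u$ forces $u(t)=e^{-at}$ for some $a>0$, so $1-u(\tau_j\xi)$ is bounded below by a positive constant $\kappa$, uniformly in $j\geq 1$ and in $\xi\in\mathrm{supp}(\hat\psi_j)$. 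I would then set
$$\phi_j(\xi) := \frac{\hat\psi_j(\xi)}{(1 - u(\tau_j\xi))^r},$$
extended by zero off $\mathrm{supp}(\hat\psi_j)$. By construction $\|\phi_j\|_\infty \leq \kappa^{-r}\|\hat\psi\|_\infty$ uniformly in $j$. By the spectral calculus the operator identity
$$\hat\psi_j(A) = \phi_j(A)\,(I - T_{\tau_j})^r$$
holds on $\mathcal H$, hence
$$\|\hat\psi_j(A)f\| \leq \|\phi_j\|_\infty\,\|(I - T_{\tau_j})^r f\| \preceq \Omega_r(\tau_j, f).$$

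The second step is a routine sum-to-integral comparison. Using the monotonicity of $s\mapsto \Omega_r(s,f)$ together with $\tau_j\asymp 2^{-j}$, on each dyadic interval $[\tau_{j+1},\tau_j]$ the integrand $\left(s^{-\alpha}\Omega_r(s,f)\right)^q$ dominates a constant multiple of $\left(2^{j\alpha}\Omega_r(\tau_j,f)\right)^q$, and integrating against $ds/s$ contributes a factor of $\log 2$. Summing over $j\geq 1$ yields
$$\sum_{j\geq 1}\left(2^{j\alpha}\|\hat\psi_j(A)f\|\right)^q \preceq \sum_{j\geq 1}\left(2^{j\alpha}\Omega_r(\tau_j,f)\right)^q \preceq \int_0^1\left(s^{-\alpha}\Omega_r(s,f)\right)^q\,\frac{ds}{s}.$$
The $j=0$ term is handled by the trivial bound $\|\hat\psi_0(A)f\|\leq \|\hat\psi_0\|_\infty\|f\|\preceq\|f\|$, absorbed into the $\|f\|$-summand of the Besov norm in Theorem \ref{mainLemma1}. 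The cases $q=1$ and $q=\infty$ follow by the obvious modifications (replacing the $\ell^q$ Hölder reasoning by a direct sup or sum bound).

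The main obstacle is the uniform bound on $\|\phi_j\|_\infty$: we need $|1 - u(\tau_j\xi)|$ bounded below by a constant independent of $j$ and of $\xi\in\mathrm{supp}(\hat\psi_j)$. This is precisely where the semigroup identity is used in an essential way, since it pins down the exponential form of $u$ and makes the scaling $\tau_j\sim 2^{-j}$ the correct one. Without this uniformity in $j$, the Bernstein-type estimate would degrade exponentially in $j$ and the subsequent dyadic sum would not converge. The condition $r<2\alpha$ does not enter the Bernstein step directly but is used implicitly to ensure that the $\ell^q$- and $L^q$-sums on both sides are finite on the same class of vectors.
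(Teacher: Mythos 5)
Your argument is correct in substance but takes a genuinely different route from the paper. The paper proves this direction by integrating over all scales at once: it introduces an auxiliary operator $G(A)$ with symbol $G(\lambda)=\lambda^{-n}\int_0^1 s^{-\alpha+r+1}(1-u(s\lambda))^r\,ds/s$, bounds $\|G(A)\hat\psi_j(A)f\|$ from above by the Besov integral via H\"older together with the decay $\|A^{-n}\hat\psi_j(A)\|\preceq 2^{-n(j-1)}$, and from below by a power of $2^j$ times $\|\hat\psi_j(A)f\|$, the lower bound resting (implicitly) on the same fact your proof makes explicit, namely that $1-u(s\lambda)$ is bounded below when $s\lambda\asymp 1$. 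You instead work at the single scale $\tau_j\asymp 2^{-j}$, invert $(1-u(\tau_j\xi))^r$ on $\mathrm{supp}(\hat\psi_j)$ to get the Bernstein-type bound $\|\hat\psi_j(A)f\|\preceq\Omega_r(\tau_j,f)$ with a constant uniform in $j$, and then convert the dyadic sum into the integral by monotonicity of $\Omega_r$. Your route is more elementary --- no auxiliary operator, no H\"older step, no free parameter $n$ to tune at the end --- and it isolates where the semigroup identity $u(s+t)=u(s)u(t)$ is really used; it also shows the estimate for $j\geq 1$ holds for every $r\in\NN$, without the hypothesis $r<2\alpha$. Two small points. In the dyadic comparison you should integrate over $[\tau_j,\tau_{j-1}]$ (or shift the index), since on $[\tau_{j+1},\tau_j]$ monotonicity only gives $\Omega_r(s,f)\geq\Omega_r(\tau_{j+1},f)$, not $\Omega_r(s,f)\geq\Omega_r(\tau_j,f)$; this costs only a harmless factor $2^{\alpha q}$. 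And your treatment of $j=0$, absorbing $\|\hat\psi_0(A)f\|$ into the $\|f\|$ summand, proves a slightly weaker inequality than the one literally displayed in the theorem (whose right-hand side is the integral alone); but that weakening is unavoidable, because $\hat\psi_0(0)=1$ makes $1-u(\tau\xi)$ non-invertible near $\xi=0$, and a vector whose spectral measure concentrates near the origin has arbitrarily small modulus of continuity while $\|\hat\psi_0(A)f\|\approx\|f\|$. What you prove is exactly what the proof of Theorem \ref{mainLemma1} requires, since the Besov norm there already carries the separate term $\|f\|$.
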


\begin{proof}
Let  $u$ be the  function that we had  in Section \ref{notations},  and $T_t=u(tA)$.  For all $\lambda>0$, define 

\begin{align}\label{G-function}
G(\lambda)=\lambda^{-n} \int_0^1    s^{-\alpha+r+1} (1- u(s\lambda))^{r}ds/s . 
\end{align}
Here, $n$ is  a  large number that can be fixed later. 
Therefore,  by  applying functional calculus theory for (\ref{G-function}), for  all $g\in \mathcal H$ 
  \begin{align}\label{operator inequality-for-g}
    G(A)g  =   \int_0^{1}  s^{-\alpha+r+1}  A^{-n}(I-T_s)^{r} g  
   ~ ds/s ~. 
   \end{align}
  If we substitute  $g$ by 
   $\hat\psi_j(A)f$   in  (\ref{operator inequality-for-g}) and then use H\"older inequality, 
 we obtain the following inequalities. 
 \begin{align}\notag
    \|G(A)\hat\psi_j(A)f\| &\leq    \int_0^{1}  s^{-\alpha} \|A^{-n}(I-T_s)^{r}\hat\psi_j(A)f\| 
   ~ s^{r+1} ds/s\\\notag
   & \leq \int_0^{1}  s^{-\alpha} \|(I-T_s)^{r}f\| \ \|A^{-n}\hat\psi_j(A)\|  ~ s^{r+1} ds/s\\\notag
   & \leq \left(\int_0^{1} \left(s^{-\alpha} \|(I-T_s)^{r}f\|\right)^q ~ds/s\right)^{1/q}
\left(\int_0^{1}  s^{(r+1)q\rq{}} (2^{-nq'(j-1)} )~ds/s\right)^{1/q\rq{}}\\\notag
   & \preceq 2^{-n(j-1)}\left(\int_0^{1} \left(s^{-\alpha} \Omega_r(s,f)\right)^q ~ds/s\right)^{1/q}  ~.
   \end{align}
  Therefore we proved that  
   \begin{align}\label{eq:1}
 \|G(A)\hat\psi_j(A)f\|^q 
\leq 2^{-n(j-1)q} \int_0^{1} \left(s^{-\alpha} \Omega_r(s,f)\right)^q ~ds/s ~.
 \end{align}
 From the other side, with 
   a simple calculation  one can show that 
 \begin{align}\label{eq:2}
 \|G(A)\hat\psi_j(A)f\|\succeq \  2^{j(-n+1-\alpha+r)} \|\hat\psi_j(A)f\|~ . 
 \end{align}
 As  result of    (\ref{eq:1}) and (\ref{eq:2}), we get
 
 \begin{align}
 2^{j\alpha q} \|\hat\psi_j(A)f\|^q &\leq 2^{j\alpha q}  2^{-jq(-n+1-\alpha+r)} \|G(A)\hat\psi_j(A)f\|^q \\\notag
& \preceq 2^{j(-2n+1+r)q} \int_0^{1} \left(s^{-\alpha} \Omega_r(s,f)\right)^q ~ds/s. 
 \end{align}
 
And, hence 
 \begin{align}
\sum_{j\geq 0} 2^{j\alpha q}  \|\hat\psi_j(A)f\|^q 
 \leq \left(\sum_{j\geq 0}  2^{j(-2n+1+r)q}\right) \int_0^{1} \left(s^{-\alpha} \Omega_r(s,f)\right)^q ~ds/s  .
 \end{align}
 This proves the theorem  if we  let  $n>\cfrac{r+1}{2}$ . 
\end{proof}

{\bf Questions.} These questions are still open  and need to be answered.  Can we remove the restriction $\alpha>1/2$ and prove   Theorem  \ref{mainLemma1}
  for  $0<\alpha <1/2$? Can we prove Theorem \ref{mainLemma1}   for $0<q<1$?

 \makeatletter
\renewcommand{\@biblabel}[1]{\hfill#1.}\makeatother

   \end{document}